\begin{document}
\newtheorem{Lemma}{\bf LEMMA}[section]
\newtheorem{Theorem}[Lemma]{\bf THEOREM}
\newtheorem{Claim}[Lemma]{\bf CLAIM}
\newtheorem{Corollary}[Lemma]{\bf COROLLARY}
\newtheorem{Proposition}[Lemma]{\bf PROPOSITION}
\newtheorem{Example}[Lemma]{\bf EXAMPLE}
\newtheorem{Fact}[Lemma]{\bf FACT}
\newtheorem{Definition}[Lemma]{\bf DEFINITION}
\newcommand{\restrict}{\mbox{$\mid\hspace{-1.1mm}\grave{}$}}
\newcommand{\covers}{\mbox{$>\hspace{-2.0mm}-{}$}}
\newcommand{\covered}{\mbox{$-\hspace{-2.0mm}<{}$}}
\newcommand{\notcover}{\mbox{$>\hspace{-2.0mm}\not -{}$}}

\newcommand{\Q}{\mathbb{Q}}
\newcommand{\Z}{\mathbb{Z}}
\newcommand{\N}{\mathbb{N}}
\newcommand{\disju}{\dot \cup}  
\newcommand{\Hom}{\textrm{Hom}}
\newcommand{\End}{\textrm{End}}
\newcommand{\Min}{\textrm{Min}}
\newcommand{\Max}{\textrm{Max}}
\newcommand{\Aut}{\textrm{Aut}}
\newcommand{\id}{\textrm{id}}

\title{Representable Posets and their Order Components}
\author{M.~E.~Adams and D.~van der Zypen }
\date{}
\maketitle
\selectlanguage{french}
\begin{abstract}
Un ensemble partiellement ordonn\'e $P$ est {\it repr\'esentable} si il existe un 
$(0,1)$-tr\'eillis distributif, dont l'ensemble ordonn\'e des id\'eaux primes
est isomorphe \`a $P$. Dans cet article, nous voulons d\'emontrer 
que si tous les composants
d'ordre de $P$ sont repr\'esentables, $P$ est repr\'esentable aussi. En plus,
nous montrons que, bien que la 
topologie d' intervalle de chaque composant soit
compacte, il existe un ensemble partiellement ordonn\'e  
qui est repr\'esentable et qui poss\`ede un composant d'ordre
non-repr\'esentable.\footnote[1]{AMS Subject Classification (2000):
06B15\\ \indent{Keywords: Priestley duality, representability,
order components}}
\end{abstract}

\selectlanguage{english}

\section{Introduction} \label{A}
A poset is said to be {\it representable} if it is
isomorphic to the poset of prime ideals of a bounded distributive
lattice (that is a distributive lattice with a largest element $1$
and a smallest element $0$).  The question of which posets are
representable essentially dates back to Balbes \cite{Ba71} 
(see also, Balbes and Dwinger \cite{BaDw74}) and has been
considered by a number of authors since (see, for example, the
expository article Priestley
\cite{Pr94}.)\\

In \cite{Pr70}, Priestley proved that the category $\mathcal{D}$
of bounded distributive lattices with $(0,1)$-preserving lattice
homomorphisms and the category $\mathcal{P}$ of compact totally
order-disconnected spaces (henceforth referred to as {\it
Priestley spaces}) with order-preserving continuous maps are
dually equivalent. (A compact {\it totally order-disconnected
space} $(X;\tau ,\leq )$ is a poset $(X;\leq )$ endowed with a
compact topology $\tau$ such that, for $x$, $y\in X$, whenever
$x\not\geq y$, then there exists a clopen decreasing set $U$ such
that $x\in U$ and $y\not\in U$.) The functor $D:{\mathcal{D}}\to
{\mathcal{P}}$ assigns to each object $L$ of ${\mathcal{D}}$ a
Priestley space $(D(L);\tau (L),\subseteq)$, where $D(L)$ is the
set of all prime ideals of $L$ and $\tau (L)$ is a suitably
defined topology (the details of which will not be required here).
The functor $E:{\mathcal{P}}\to {\mathcal{D}}$ assigns to each
Priestley space $X$ the lattice $(E(X);\cup ,\cap ,\emptyset ,X)$, where $E(X)$
is the set of all clopen decreasing sets of $X$.  In particular, a
poset $(X;\leq )$ is seen to be representable iff there exists a
topology $\tau$ such that $(X;\tau ,\leq )$ is a Priestley
space.\\

Let $(X;\leq)$ be a poset. Then we define a relation $R$ on $X$ by
setting $(x,y) \in R$ whenever $x \leq y$ or $y \leq x$. Let $R'$
be the transitive closure of $R$. Then $R'$ is an equivalence
relation. An \textsl{order component} of $X$ is an equivalence
class $[x]_{R'}$ of the relation $R'$ for some $x \in X$. Further,
for any $Y\subseteq X$, let $(Y] = \{ x\in X\mid x\leq y \mbox{
for some } y\in Y\}$ and $[Y) = \{ x\in X\mid x\geq y \mbox{ for
some } y\in Y\}$. Should $Y = \{ y\}$ for some $y\in X$, then, for
simplicity, we will denote $(Y]$ and $[Y)$ by $(y]$ and $[y)$,
respectively. Finally, let $[x,y] = [x) \cap (y]$,
${\mathcal{S}}^- =\{X\setminus (x]\mid x \in X\}$, and
${\mathcal{S}}^+ =\{X\setminus [x)\mid x \in X\}$.  Then
${\mathcal{S}} ={\mathcal{S}}^- \cup {\mathcal{S}}^+$ is an open
subbase for the so called {\it interval topology} $\tau_i$ on $X$
(sometimes, in the interest of clarity, $\tau_i$ will be denoted
$\tau_i(X)$ when we wish to emphasize the poset concerned). It is
well known that if $(X; \tau, \leq)$ is a Priestley space, then
$\tau$ contains the interval
topology $\tau_i$.\\

Our principal result is the following:\\

\begin{Theorem} \label{A1}
If the order components of a poset $(X;\leq )$ are
representable, then so is $X$. However, even though each order
component of a representable poset is compact under its interval
topology, there exists a representable poset with an order
component which is not representable.
\end{Theorem}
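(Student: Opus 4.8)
The theorem has three assertions — that representable components force a representable poset, that the components of a representable poset are compact in their interval topology, and that the converse of the first fails — and I would establish them in that order.

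\textbf{If every order component is representable, so is $X$.} Write $X$ as the disjoint union $\bigcup_{i\in I}C_i$ of its order components and fix on each $C_i$ a Priestley topology $\tau_i$. If $|I|\leq 1$ there is nothing to prove, so fix $0\in I$ and a point $p\in C_0$. Topologising $X$ as
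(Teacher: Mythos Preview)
Your proposal breaks off mid-sentence (``Topologising $X$ as''), so there is essentially nothing to assess. The setup you begin for the first assertion --- fixing a distinguished index $0\in I$ and a basepoint $p\in C_0$ --- matches the paper's approach exactly: the paper generates a topology on $X$ from a subbase consisting of (i) all $\tau_i$-open sets for $i\neq 0$, (ii) the $\tau_0$-open sets not containing $p$, and (iii) sets containing $p$ whose trace on $C_0$ is $\tau_0$-open and which omit exactly one of the remaining components; compactness is then checked via Alexander's subbase lemma and total order-disconnectedness by a case analysis. If that is where you were heading you are on the right track, but as it stands nothing has been proved.

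The second and third assertions are not addressed at all. The second is short --- the paper shows via Alexander's lemma that a disjoint union is interval-compact iff each summand is, and then uses that any Priestley topology refines the interval topology. The third, however, carries the real weight of the theorem, and your outline gives no indication of how you would produce the required example. The paper builds an explicit countable connected poset $(P;\leq)$ inductively, realises an order-isomorphic copy of it as the unique nontrivial order component of a Priestley order constructed on the space of Dedekind cuts of $(0,1)\cap\Q$ (a substantial case-by-case verification), and then shows $(P;\leq)$ is not itself representable by proving that in any Priestley topology on $P$ every point would be an accumulation point, forcing the dual lattice to be order-dense and hence to contain a $(0,1)$-copy of the rational interval $[0,1]$ --- impossible since the dual of that sublattice is uncountable while $P$ is countable. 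Without some construction of this kind the theorem is not established.
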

\ \\

The proof of \ref{A1} will be given in \S \ref{B}, where we begin in
\ref{B3} by showing that a poset is compact under its interval
topology iff each order component is compact under its respective
interval topology. As observed in
\ref{B4},
it follows readily from this that each
order component of a representable poset is compact with respect
to its interval topology.  We then establish in \ref{B5}
that if every order component of a poset is representable, then so
too is the poset.  Finally, we define a countably infinite poset
which we show to be order-isomorphic to an order component of a
representable poset in \ref{B2}, but which, as we show in \ref{B6}, 
is not itself representable.\\

For any undefined terms or additional background, we refer the
reader to the texts Gr\"{a}tzer \cite{Gr98} and Kelley
\cite{Ke55}, with each of which our notation is consistent.

\section{Proof of ${\bf 1.1}$} \label{B}

\begin{Lemma} \label{B3}
Let $(X_k; \leq_k)_{k \in K}$ be a family of
pairwise disjoint nonempty posets. Then for $(X;\leq )$ where $X=
\bigcup_{k \in K}X_k$ and $\leq = \bigcup_{k \in K} \leq_k$, the
following are equivalent{\rm :}\\

{\rm (}i{\rm )} for each $k \in K$, the space $(X_k; \tau_i(X_k))$
is
compact{\rm ;}\\

{\rm (}ii{\rm )}  $(X; \tau_i(X))$ is compact.\\
\label{union_of_interval_compact}
\end{Lemma}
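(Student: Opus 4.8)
My plan is to combine the Alexander subbase lemma (Kelley \cite{Ke55}) with one structural observation: because the posets $X_k$ are pairwise disjoint and $\leq=\bigcup_{k\in K}\leq_k$, for each $k\in K$ and $x\in X_k$ the sets $(x]$ and $[x)$ computed in $X$ already lie inside $X_k$ and coincide with those computed in $X_k$. Hence every member of the subbase ${\mathcal S}={\mathcal S}^-\cup{\mathcal S}^+$ of $\tau_i(X)$ has the form
\[
X\setminus(x]=\bigl(X_k\setminus(x]\bigr)\cup\bigcup_{j\neq k}X_j
\qquad\text{or}\qquad
X\setminus[x)=\bigl(X_k\setminus[x)\bigr)\cup\bigcup_{j\neq k}X_j
\]
for a unique $k$ (call it the \emph{index} of the set); in particular such a set contains $X_j$ outright for every $j\neq k$, and its intersection with $X_k$ is a member of the interval subbase of $X_k$, while conversely every member of that subbase of $X_k$ arises this way. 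I may assume $K\neq\emptyset$, so that $X\neq\emptyset$ (the case $K=\emptyset$ being vacuous), and recall that each $X_k$ is nonempty.

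For (i)$\Rightarrow$(ii) I would argue as follows. Assuming every $(X_k;\tau_i(X_k))$ is compact, it suffices by the Alexander subbase lemma to extract a finite subcover from an arbitrary cover ${\mathcal W}\subseteq{\mathcal S}$ of $X$. If ${\mathcal W}$ has two members of distinct indices $k_1\neq k_2$, then by the observation the first contains $\bigcup_{j\neq k_1}X_j$ and the second contains $X_{k_1}$, so these two sets alone cover $X$. Otherwise all members of ${\mathcal W}$ share a single index $k_0$; then a fixed $W_0\in{\mathcal W}$ covers $\bigcup_{j\neq k_0}X_j$, while $\{W\cap X_{k_0}:W\in{\mathcal W}\}$ is an open cover of the compact space $X_{k_0}$ and so has a finite subcover arising from $W_1,\dots,W_n\in{\mathcal W}$; then $\{W_0,W_1,\dots,W_n\}$ covers $X$.

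For (ii)$\Rightarrow$(i) I would fix $k\in K$ and, again via the Alexander subbase lemma, start from a cover ${\mathcal V}$ of $X_k$ by members of its interval subbase. Using the observation, each $V\in{\mathcal V}$ extends to a member $\widehat V\in{\mathcal S}$ with $\widehat V\cap X_k=V$ and $\widehat V\supseteq\bigcup_{j\neq k}X_j$; since ${\mathcal V}$ is nonempty, the family $\{\widehat V:V\in{\mathcal V}\}$ is then an open cover of the compact space $X$, and any finite subcover $\widehat{V_1},\dots,\widehat{V_n}$ restricts, upon intersection with $X_k$, to a finite subcover $V_1,\dots,V_n$ of ${\mathcal V}$.

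I do not expect a genuine obstacle; the one step needing a moment's thought is the case split in the proof of (i)$\Rightarrow$(ii) — noticing that a subbasic cover of $X$ whose members do not all share a common index is \emph{automatically} reduced to a two-element subcover, which is precisely the rigidity of the interval topology on a disjoint union. Everything else is bookkeeping with the identities displayed above.
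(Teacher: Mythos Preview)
Your proof is correct and follows essentially the same route as the paper's: both invoke Alexander's subbase lemma, split (i)$\Rightarrow$(ii) into the case where all subbasic sets in the cover have a single index $k_0$ (reduce to compactness of $X_{k_0}$) versus the case where two have different indices (those two sets alone cover $X$), and handle (ii)$\Rightarrow$(i) by lifting a subbasic cover of $X_k$ to one of $X$ and restricting a finite subcover back. The only cosmetic differences are that you isolate the ``index'' observation explicitly at the outset, and your extra set $W_0$ in the single-index case is harmless but redundant, since each of $W_1,\dots,W_n$ already contains $\bigcup_{j\neq k_0}X_j$.
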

\begin{proof}
Assume that (i) holds and let $\mathcal{U}$ be an open cover of $X
=\bigcup_{k \in K}X_k$. By Alexander's subbase lemma, we may
assume that $${\mathcal{U}}=\{X \backslash (a] \mid a \in A\} \cup
\{X \backslash [b) \mid b \in B\}$$ for some subsets
$A , B \subseteq X$. We distinguish two cases: \\

First, there is some $k \in K$ such that $A \cup B \subseteq X_k$.
In which case, consider ${\mathcal{U}}_{X_k} = \{X_k \backslash (a]
\mid a \in A\} \cup \{X_k \backslash  [b) \mid b \in B\}$. Since
$(X_k; \tau_i(X_k))$ is compact by assumption,
${\mathcal{U}}_{X_k}$ has a finite subcover
$$\{X_k\backslash (a_1], ..., X_k\backslash
(a_r]\} \cup \{X_k\backslash  [b_1), ..., X_k\backslash [b_s)\},$$
so $\{X\backslash  (a_1], ..., X\backslash  (a_r]\} \cup
\{X\backslash  [b_1), ..., X\backslash [b_s)\}$ is a finite
subcover of $\mathcal{U}$. Second, there is no $k \in K$ such that
$A \cup B \subseteq X_k$. In which case there are $w_1, w_2 \in A
\cup B$ such that $w_1 \in X_{k}$ and $w_2\in X_{k'}$ for some $k
\neq k' \in K$. If $w_1, w_2 \in A$, then $\{X\backslash (w_1], X
\backslash (w_2]\}$ is a finite subcover of $\mathcal{U}$. If $w_1
\in A, w_2 \in B$, then $\{X\backslash (w_1], X \backslash
[w_2)\}$ is a finite subcover of $\mathcal{U}$ (similarly for $w_1
\in B, w_2 \in A$). Finally if $w_1, w_2 \in B$, then
$\{X\backslash [w_1), X \backslash [w_2)\}$ is a
finite subcover of $\mathcal{U}$.\\

Thus, in any case, $(X;\tau_i(X))$ is compact.\\

Assume that (ii) holds and let $k\in K$. Assume that $\mathcal{U}$
is an open cover of $X_k$. By Alexander's subbase lemma we may
assume that $${\mathcal{U}}=\{X_k \backslash  (a] \mid a \in A\}
\cup \{X_k \backslash [b) \mid b \in B\}$$ for some subsets $A , B
\subseteq X_k$. Consider the following open cover of $X=\bigcup_{l
\in K}X_l$
$${\mathcal{U}}^* =\{X \backslash (a] \mid a \in A\} \cup
\{X \backslash [b) \mid b \in B\}. $$ Then ${\mathcal{U}}^*$ has a
finite subcover $\{X\backslash (a_1], ..., X\backslash (a_r]\}
\cup \{X\backslash [b_1), ..., X\backslash [b_s)\}$ since $X$ is
compact with its interval topology. Thus $\{X_k\backslash (a_1],
..., X_k\backslash (a_r]\} \cup \{X_k\backslash [b_1), ...,
X_k\backslash [b_s)\}$ is a finite subcover of $X_k$.
\end{proof}

If $(X;\leq )$ is representable, then, for some topology $\tau$,
$(X;\tau ,\leq )$ is a Priestley space.  In particular, $(X;\tau
)$ is a compact space and, as $\tau_i\subseteq \tau$, so too is
$(X;\tau_i)$.  Thus, the following is an immediate consequence of
\ref{B3}.
\begin{Lemma} \label{B4}
Each order component of a representable poset is
compact with respect to its interval topology.
\end{Lemma}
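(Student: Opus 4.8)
The plan is to derive the statement directly from Lemma~\ref{B3}, exactly as the sentence preceding the statement indicates. Suppose $(X;\leq)$ is representable. First I would invoke the definition: there is a topology $\tau$ on $X$ such that $(X;\tau,\leq)$ is a Priestley space; in particular $(X;\tau)$ is compact. Next, since it was recorded in \S\ref{A} that every Priestley topology contains the interval topology, we have $\tau_i(X)\subseteq\tau$. A topology coarser than a compact topology on the same underlying set is again compact (any cover of $X$ by $\tau_i(X)$-open sets is a cover by $\tau$-open sets, hence has a finite subcover), so $(X;\tau_i(X))$ is compact.

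It then remains to pass from compactness of $X$ to compactness of each order component, and for this I would verify that the order components of $(X;\leq)$ satisfy the hypotheses of Lemma~\ref{B3}. Writing $(X_k)_{k\in K}$ for the order components of $X$ and $\leq_k$ for the restriction of $\leq$ to $X_k$, the sets $X_k$ are nonempty and pairwise disjoint, being the classes of the equivalence relation $R'$, and clearly $X=\bigcup_{k\in K}X_k$. The only point needing a word of justification is that $\leq=\bigcup_{k\in K}\leq_k$: if $x\leq y$ then $(x,y)\in R\subseteq R'$, so $x$ and $y$ lie in the same order component, which shows that no instance of $\leq$ joins two distinct components. Thus $(X;\leq)$ is precisely the disjoint union of the posets $(X_k;\leq_k)$ in the sense of Lemma~\ref{B3}.

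Having checked the hypotheses, I would apply Lemma~\ref{B3} in the direction (ii)$\Rightarrow$(i): since $(X;\tau_i(X))$ is compact, each $(X_k;\tau_i(X_k))$ is compact, which is exactly the assertion to be proved. There is no genuine obstacle here, since the combinatorial core has already been isolated in Lemma~\ref{B3}; the only step demanding any care is the remark that distinct order components are pairwise incomparable, so that the family of order components really does exhibit $(X;\leq)$ as a disjoint union of posets and Lemma~\ref{B3} applies without modification.
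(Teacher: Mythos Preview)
Your proposal is correct and follows the paper's approach exactly: the paper records, just before the statement, that representability gives a compact Priestley topology containing $\tau_i$, so $(X;\tau_i(X))$ is compact, and then cites Lemma~\ref{B3} to conclude. Your additional verification that the order components satisfy the hypotheses of Lemma~\ref{B3} is routine but sound.
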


We now go on to show that if the order components of a poset are
representable, then so is the poset.
\begin{Lemma} \label{B5}
Let $(X_k, \leq_k)_{k \in K}$ be a family of
pairwise disjoint nonempty representable posets. Then $(X;\leq )$
is representable, where $X= \bigcup_{k \in K}X_k$ and $\leq =
\bigcup_{k \in K} \leq_k$.
\end{Lemma}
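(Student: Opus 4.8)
The plan is to work on the Priestley-space side. Since each $X_k$ is representable, fix for each $k \in K$ a topology $\tau_k$ making $(X_k;\tau_k,\leq_k)$ a Priestley space. The natural first attempt would be to topologize $X = \bigcup_{k\in K}X_k$ by the disjoint-union topology, but that fails to be compact as soon as $K$ is infinite, so it cannot yield a Priestley space. Instead I would build a topology on $X$ by adjoining a single point or, more precisely, by taking the one-point-type compactification adapted to the ordered setting: declare a set $U\subseteq X$ to be open iff $U\cap X_k\in\tau_k$ for every $k$ \emph{and}, in addition, $U$ contains $X_k$ for all but finitely many $k$ whenever $U$ is ``cofinite across components'' in the appropriate sense. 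Concretely, I expect the right move is to pick a basepoint in each $X_k$ and glue the spaces at a single new point $\infty$, or to use the Priestley space of the product lattice; the cleanest formulation is: let $L_k = E(X_k)$ be the bounded distributive lattice dual to $X_k$, and observe that one wants the lattice whose prime ideal poset is $X$.

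The key structural observation is that $X$, as a poset, is the disjoint union of the $X_k$, so its poset of prime ideals should correspond to a lattice built as a ``bounded sum'' of the $L_k$. Specifically, I would take $L$ to be the sublattice of $\prod_{k\in K}L_k$ consisting of those elements $(x_k)_{k\in K}$ that are eventually equal to $0$ or eventually equal to $1$ (i.e. $x_k = 0$ for all but finitely many $k$, or $x_k = 1$ for all but finitely many $k$). This $L$ is a bounded distributive lattice: $0 = (0)_k$ and $1 = (1)_k$ lie in it, and it is closed under finite meets and joins because the ``eventually $0$'' and ``eventually $1$'' conditions are each closed under both operations and their union is closed too (a finite meet of an eventually-$1$ element and an eventually-$0$ element is eventually $0$, etc.). The main step is then to compute the prime ideals of $L$. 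I would show that every prime ideal $P$ of $L$ either contains the ideal of eventually-$0$ elements supported off a fixed $k_0$ — in which case $P$ restricts to a prime ideal of $L_{k_0}$ and conversely — or else $P$ is the ideal of all eventually-$0$ elements, which I must check is either prime or else does not arise. The order-isomorphism $D(L)\cong X$ then follows by checking that inclusion of prime ideals of $L$ matches the disjoint-union order on $\bigcup_k X_k$, which is immediate once the bijection is pinned down because two prime ideals coming from different components are incomparable (neither contains the other, since each misses an element the other contains).

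The main obstacle I anticipate is handling the ``boundary'' prime ideals correctly — the ones not concentrated on a single component. The ideal $I_0$ of eventually-$0$ elements is an ideal of $L$; is it prime? Its complement is the set of eventually-$1$ elements, which is closed under meets, so yes $I_0$ is prime. That would produce an extra point of $D(L)$ not present in $X$, which is fatal. So the construction above is not quite right, and the real work is to excise that point: I would instead use the lattice $L$ of families that are eventually $0$ \emph{or} eventually $1$ but \emph{identify} — no; the correct fix is to require eventually $0$ only, i.e. take $L = \{(x_k) : x_k = 0 \text{ for all but finitely many } k\}\cup\{1\}$, the ideal-like sublattice together with the top. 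One checks this is a bounded distributive lattice (the only issue is closure under join, and $a\vee b$ for $a,b\neq 1$ is again eventually $0$; $a\vee 1 = 1$), and now a prime ideal, being proper, cannot contain $1$, and its intersection with the eventually-$0$ part, being prime there and the eventually-$0$ part being the directed union of the $\prod_{k\in F}L_k$ over finite $F$, must be concentrated on exactly one coordinate $k_0$ and restrict to a prime ideal of $L_{k_0}$; the empty-support ideal $\{(0)_k\}$ fails to be prime here since $L\setminus\{(0)_k\}$ is not closed under meet (two elements supported on disjoint finite sets meet to $0$). Thus $D(L)\cong\bigdot\bigcup_k X_k = X$ as posets, establishing representability. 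I would present the details of the prime-ideal computation as the heart of the proof, with the lattice axioms for $L$ dispatched in a sentence.
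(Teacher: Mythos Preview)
Your lattice-side strategy is natural, but the final construction does not have the prime spectrum you claim. In your lattice
\[
L \;=\; \bigl\{(x_k)_{k\in K} : x_k = 0 \text{ for all but finitely many } k\bigr\}\ \cup\ \{1\},
\]
the top element $1$ is join-irreducible: if $a,b\in L$ are both eventually $0$ then so is $a\vee b$, hence $a\vee b\neq 1$. Consequently $I_0 := L\setminus\{1\}$ is itself a prime ideal of $L$. You correctly ruled out the zero ideal $\{0\}$, but you overlooked $I_0$. Since every proper ideal of $L$ is contained in $I_0$, this extra prime ideal sits strictly above every other point of $D(L)$; thus $D(L)$ is order-isomorphic to $X$ with a new top element adjoined, not to $X$. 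In fact this is exactly the same obstruction you diagnosed in your first attempt (the ideal of eventually-$0$ elements in the ``eventually constant'' lattice): passing to the smaller lattice did not remove that ideal, it only renamed it $L\setminus\{1\}$.

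The paper avoids this difficulty by staying on the topological side. It fixes one component $X_{k}$ and a basepoint $x\in X_{k}$, and takes as a subbase the open sets of the other components, the open sets of $X_{k}$ missing $x$, and ``large'' sets containing $x$ that consist of an open neighbourhood of $x$ in $X_{k}$ together with all components except one. Alexander's subbase lemma gives compactness (any subbasic cover must use one of the large sets, after which only finitely much remains to cover), and total order-disconnectedness is checked directly. The point is that no new order-theoretic element is introduced: one compactifies by making $x$ the ``point at infinity'' for the remaining components, and since $x$ already lives in $X$ the underlying poset is unchanged. If you want to salvage the lattice approach, you would need a construction whose spectrum genuinely has no such extra maximal point---for instance the coproduct of the $L_k$ in the category of bounded distributive lattices---but that is considerably harder to describe explicitly than the paper's topology.
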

\begin{proof}
If $K$ is empty or a singleton, the statement is trivial. So we
may assume that $K$ has more than one element. For any $k \in K$,
let $\tau_k$ be a topology making $(X_k;\tau_k, \leq_k)$ a
Priestley space. Fix $k \in K$ and $x \in X_k$. We now build
a subbase for a topology on $X$ in three steps. We set:\\

${\mathcal{S}}_1= \bigcup_{l \in K\backslash\{k\}} \tau_l$;\\

${\mathcal{S}}_2= \{U \in \tau_{k} \mid x \notin U\}$;\\

${\mathcal{S}}_3= \{U \subseteq X \mid x \in U \textrm{ and }U
\cap X_{k} \in \tau_{k} \textrm{ and, for some } k' \in
K\backslash\{k\},\ U=[U \cap X_{k}] \cup [\bigcup_{l \in
K\backslash\{k,k'\}} X_l]\}$.\\

Then let $\tau$ be the topology having
${\mathcal{S}}={\mathcal{S}}_1 \cup {\mathcal{S}}_2
\cup{\mathcal{S}}_3$ as a subbase. Using Alexander's subbase lemma
we check easily that $(X;\tau)$ is compact using the fact that any
subbase member containing $x$ is, in some sense, large by virtue of 
the definition
of ${\mathcal{S}}_3 \subseteq {\mathcal{S}}$. Moreover, an easy
distinction by cases tells us that $(X;\tau ,\leq )$ is totally
order-disconnected.
\end{proof}

It remains to provide an example of a poset $(P;\leq )$ which is
order isomorphic to an order component of a representable poset,
but is
not representable itself.\\

On the set
\[ P = \{p\}\cup \{ p_{i_0,\ldots ,i_n}\mid 0\leq n<\omega \mbox{ and } 0\leq i_j<\omega \mbox{ for }0\leq j\leq n\}, \]
inductively define an order relation $\leq$ as follows.\\

For $0\leq j<i<\omega$,
\[ p<p_i<p_j. \]

For $0\leq i_0<\omega$, $0\leq k\leq i_0$, and $0\leq i<j<\omega$,
\[ p_{i_0,i}<p_{i_0,j}<p_k. \]

For $0\leq i_0,i_1<\omega$, $0\leq k\leq i_1$, and $0\leq
j<i<\omega$,
\[ p_{i_0,k}<p_{i_0,i_1,i}<p_{i_0,i_1,j}.\]

In general, let $0<r<\omega$.\\

For $0\leq i_0,i_1,\ldots ,i_{2r}<\omega$, $0\leq k\leq i_{2r}$,
and $0\leq i<j<\omega$,
\[  p_{i_0,i_1, \ldots ,i_{2r-1},i_{2r},i}  < p_{i_0,i_1, \ldots ,i_{2r-1},i_{2r},j}  <p_{i_0, i_1,\ldots
,i_{2r-1},k}.\]

For $0\leq i_0,i_1,\ldots ,i_{2r+1}<\omega$, $0\leq k\leq
i_{2r+1}$, and $0\leq j<i<\omega$,

\[
p_{i_0, i_1,\ldots ,i_{2r},k}<p_{i_0,i_1, \ldots
,i_{2r},i_{2r+1},i} <p_{i_0,i_1, \ldots ,i_{2r},i_{2r+1},j}. \]  \\

To see that $(P;\leq )$ is a poset, for $0\leq n<\omega$, let
\[  P(n) = \{ p\}\cup \{ p_{i_0,\ldots ,i_m}\mid \mbox{ $0\leq m\leq n$ and, for $0\leq j\leq m$, $0\leq
i_j<\omega \}$}.  \]
Thus, $P(0)$ and, for each $0\leq n<\omega$,
$P(n+1)\setminus P(n)$ are clearly antisymmetric and transitive.
Further, $x\in P(n)$ is comparable with $y\in P\setminus P(n)$
only if $x\in P(n)\setminus P(n-1)$ and $y\in P(n+1)\setminus
P(n)$, where it is the case that $x>y$ and $x<y$ depending on
whether $n$ is even or odd, respectively.  In particular,  $\leq$
is antisymmetric. Moreover, if $n$ is even, say $n = 2r$, then $x
= p_{i_0, \ldots , i_{2r-1},k}$ and $y = p_{i_0, \ldots
,i_{2r},i}$ providing $0\leq k\leq i_{2r}$ and $0\leq i<\omega$,
and if $n$ is odd, say $n = 2r+1$, then $x = p_{i_0,\ldots ,
i_{2r},k}$ and $y = p_{i_0,\ldots ,i_{2r+1},i}$ providing $0\leq
k\leq i_{2r+1}$ and $0\leq i<\omega$. In particular, $\leq$ is
transitive and, as claimed, $(P;\leq )$ is seen to be a countable
connected poset. We also note in passing that, for $0\leq i_0,
\ldots ,i_n<\omega$, $[p_{i_0, \ldots ,i_n})$ and $(p_{i_0, \ldots
,i_n}]$ are finite chains depending on whether $n$ is even or odd,
respectively, a fact that we will refer back to later.\\

In order to show that $(P;\leq )$ is order-isomorphic to an order
component of a representable poset, we will define a suitable
order $\preceq$ on a compact totally disconnected space $(C;\tau
)$ which itself is homeomorphic to the {\it Stone space} of a
countable atomless Boolean algebra.  To do so, we will need an
explicit description of $(C;\tau )$, which we now give.\\

Let ${\bf Q} = (Q;\leq)$ denote the rational interval $(0,1)$.
Then $(A,B)$ is a \textit{Dedekind cut} of $Q$ providing that $A$
and $B$ are disjoint non-empty sets such that $Q = A\cup B$ and,
for $a\in A$ and $b\in B$, $a<b$.  For a Dedekind cut $(A,B)$ of
${\bf Q}$, $A$ is a \textit{gap} providing $A$ does not have a
greatest element and $B$ does not have a smallest element and,
otherwise, it is a \textit{jump}.  Let $(C;\leq )$ denote the set
of all decreasing subsets of the rational interval $(0,1)$ ordered
by inclusion.  Thus, for $I\in C$, if $I\neq \emptyset$ or $Q$,
then $I$ is a jump precisely when $I = (0,r)$ or $(0,r]$ for some
$r\in Q$.  Intuitively, $(C;\leq )$ may be thought of as the real
interval $[0,1]$ where every rational element $0<r<1$ is replaced
by a covering pair. The interval topology $\tau_i$, denoted
henceforth simply by $\tau$, on $(C;\leq )$ has as a base the open
intervals $C$, $[\emptyset , I) = \{ J\in C:\ J\subset I\}$,
$(I,Q] = \{ J\in C:\ I\subset J\}$, and $(I,J) = \{K\in C:\
I\subset K\subset J\}$. It is well-known that $(C;\tau )$ is a
compact totally disconnected space, whose clopen subsets are
precisely the sets $\emptyset$, $C$, and finite unions of sets of
the form $[I,J] = \{ K\in C:\ I\subseteq K\subseteq J\}$ where $I
= (0,r]$ and $J = (0,s)$ for $r$, $s\in Q$ with $r<s$.\\

Setting $Q = ( s_i:\ 0\leq i<\omega )$ to be some enumeration of
$Q$, we now inductively define a new partial order on $C$ as
follows:\\

In $C$, choose gaps $x$ and, for $0\leq i<\omega$, $x_i$ such that
\[ x<x_i<x_j \mbox{ for }0\leq j<i<\omega ,  \]
where $x$ is a member of the closure of $\{x_i\mid 0\leq i<\omega\}$, 
denoted $cl(\{x_i\mid 0\leq i<\omega\})$, and set
\[ x\prec x_i\prec x_j. \]
Choose clopen intervals $(X_i:\ 0\leq i<\omega )$
such that $x_i\in X_i$, $X_i\cap X_j = \emptyset$ whenever $i\neq
j$, the length of $X_i$, denoted $ln(X_i)$, is $\leq \frac{1}{2}$ 
in the pseudometric obtained from the
metric imposed on $C$ by the real metric on $(0,1)$, and $(0,s_0)$,
$(0,s_0]\not\in X_i$ for any $0\leq i<\omega$.\\

For $0\leq i_0<\omega$, $0\leq k\leq i_0$, and $0\leq i<\omega$, choose gaps
$x_{i_0,i}\in X_{i_0}$ such that
\[  x_{i_0,i}<x_{i_0,j}< x_k \mbox{ for }0\leq i<j<\omega , \]
where $x_{i_0}\in cl(\{ x_{i_0,i}\mid  0\leq i<\omega
\} )$, and set
\[  x_{i_0,i}\prec x_{i_0,j}\prec x_k. \]
Choose clopen intervals $(X_{i_0,i}:\ 0\leq i<\omega
)$ such that $x_{i_0,i}\in X_{i_0,i}$, $X_{i_0,i}\cap X_{i_0,j} =
\emptyset$ for $i\neq j$, $X_{i_0,i}\subseteq X_{i_0}$,
$ln(X_{i_0,i})\leq \frac{1}{2^2}$, and $(0,s_1)$, $(0,s_1]\not\in
X_{i_0,i}$ for $0\leq i<\omega$.\\

For $0\leq i_0,i_1<\omega$, $0\leq k\leq i_1$,  and $0\leq i<\omega$, choose gaps
$x_{i_0,i_1,i}\in X_{i_0,i_1}$ such that
\[ x_{i_0,k} <  x_{i_0,i_1,i}  < x_{i_0,i_1,j} \mbox{ for }0\leq j<i<\omega , \]
where $x_{i_0,i_1}\in cl (\{ x_{i_0,i_1,i}\mid
0\leq i<\omega \} )$, and set
\[ x_{i_0,k} \prec  x_{i_0,i_1,i}  \prec x_{i_0,i_1,j}. \]
Choose clopen intervals $(X_{i_0,i_1,i}:\ 0\leq
i<\omega )$ such that $x_{i_0,i_1,i}\in X_{i_0,i_1,i}$,
$X_{i_0,i_1,i}\cap X_{i_0,i_1,j} = \emptyset$ for $i\neq j$,
$X_{i_0,i_1,i}\subseteq X_{i_0,i_1}$, $ln(X_{i_0,i_1,i})\leq
\frac{1}{2^3}$, and $(0,s_2)$, $(0,s_2]\not\in X_{i_0,i_1,i}$ for
$0\leq i<\omega$.\\

In general, let $0<r<\omega$.\\

For $0\leq i_0,i_1,\ldots ,i_{2r}<\omega$, $0\leq k\leq i_{2r}$, and $0\leq
i<\omega$, choose gaps $x_{i_0,i_1,\ldots ,i_{2r},i}\in
X_{i_0,i_1,\ldots ,i_{2r}}$ such that
\[  x_{i_0,i_1, \ldots i_{2r -1},i_{2r},i} < x_{i_0,i_1, 
\ldots i_{2r -1},i_{2r},j} < x_{i_0, i_1,\ldots ,i_{2r-1},k} \mbox{ for }0\leq j<i<\omega , \]
where $x_{i_0,\ldots ,i_{2r}}\in cl(\{
x_{i_0,\ldots ,i_{2r},i}\mid 0\leq i<\omega \})$, and set
\[  x_{i_0,i_1, \ldots i_{2r -1},i_{2r},i} \prec x_{i_0,i_1, 
\ldots i_{2r -1},i_{2r},j} \prec x_{i_0, i_1,\ldots ,i_{2r-1},k}. \]
Choose clopen intervals $(X_{i_0, i_1,\ldots ,i_{2r},i}:\
0\leq i<\omega )$ such that $x_{i_0, i_1,\ldots ,i_{2r},i}\in
X_{i_0, i_1,\ldots ,i_{2r},i}$, $X_{i_0, i_1,\ldots ,i_{2r},i}
\cap X_{i_0, i_1,\ldots ,i_{2r},j} = \emptyset$ for $i\neq j$, $X_{i_0,
i_1,\ldots ,i_{2r},i} \subseteq X_{i_0, i_1,\ldots ,i_{2r}}$,
$ln(X_{i_0, i_1,\ldots ,i_{2r},i}) \leq \frac{1}{2^{2r+1}}$, and
$(0,s_{2r+1})$, $(0,s_{2r+1}]\not\in X_{i_0, i_1,\ldots
,i_{2r},i}$ for $0\leq i<\omega$.\\

For $0\leq i_0,i_1,\ldots ,i_{2r+1}<\omega$, $0\leq k\leq i_{2r+1}$, and $0\leq
i<\omega$, choose gaps $x_{i_0,i_1,\ldots
,i_{2r+1},i}\in X_{i_0,i_1,\ldots ,i_{2r+1}}$ such that
\[ x_{i_0, i_1,\ldots ,i_{2r},k} < x_{i_0,i_1, \ldots i_{2r},i_{2r+1},i} 
< x_{i_0,i_1, \ldots i_{2r},i_{2r+1},j}  \mbox{  for }0\leq i<j<\omega ,  \]
where $x_{i_0,\ldots ,i_{2r+1}}\in cl(\{
x_{i_0,\ldots ,i_{2r+1},i}\mid 0\leq i<\omega \})$, and set
\[ x_{i_0, i_1,\ldots ,i_{2r},k} \prec x_{i_0,i_1, \ldots i_{2r},i_{2r+1},i} 
\prec x_{i_0,i_1, \ldots i_{2r},i_{2r+1},j}.  \]
Choose clopen intervals $(X_{i_0, i_1,\ldots
,i_{2r+1},i}:\ 0\leq i<\omega )$ such that $x_{i_0, i_1,\ldots
,i_{2r+1},i}\in X_{i_0, i_1,\ldots ,i_{2r+1},i}$, $X_{i_0,
i_1,\ldots ,i_{2r+1},i} \cap X_{i_0, i_1,\ldots ,i_{2r+1},j} = \emptyset$ for
$i\neq j$, $X_{i_0, i_1,\ldots ,i_{2r+1},i} \subseteq X_{i_0,
i_1,\ldots ,i_{2r+1}}$, $ln(X_{i_0, i_1,\ldots ,i_{2r+1},i})
\leq \frac{1}{2^{2(r+1)}}$, and $(0,s_{2r+2})$, $(0,s_{2r+2}]\not\in X_{i_0,
i_1,\ldots ,i_{2r+1},i}$ for $0\leq i<\omega$.\\

Elsewhere on $C$, let $\preceq$ be trivial.  Thus, since $(X;\preceq )$ is 
order-isomorphic to $(P;\leq )$, $(C;\preceq )$ is a poset whose 
order components consist precisely of $X = \{ x \} \cup 
\{ x_{i_0,\ldots ,i_n}\mid 0\leq n < \omega \mbox{
and } 0\leq i_j < \omega \mbox{ for } 0\leq j\leq n \}$ and
$2^{\omega}$ singletons.\\

\begin{Lemma}\label{B1}
$(C;\tau ,\preceq )$ is a Priestley space.
\end{Lemma}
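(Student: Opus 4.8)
The plan is to verify directly the two defining conditions of a Priestley space. Compactness and total disconnectedness of $(C;\tau)$ have already been recorded, so the whole content is total order‑disconnectedness: given $a,b\in C$ with $a\not\succeq b$ with respect to $\preceq$, one must produce a clopen $\preceq$‑decreasing set $U$ with $a\in U$ and $b\notin U$. Two elementary observations frame the work. First, $\preceq$ is non‑trivial only on the order component $X=\{x\}\cup\{x_{i_0,\dots,i_n}\}$, so a clopen set is $\preceq$‑decreasing exactly when its trace on $X$ is a down‑set of $(X;\preceq)$; in particular $U$ may be taken as large as we please, provided it is clopen, its intersection with $X$ is downward closed, and it omits $b$. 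Second, writing $\downarrow a$ and $\uparrow a$ for the principal down‑ and up‑sets with respect to $\preceq$, the hypothesis $a\not\succeq b$ says precisely that $b\notin\downarrow a$; and since the family of clopen $\preceq$‑decreasing supersets of $\downarrow a$ is closed under finite intersection, it suffices to separate $b$ from $\downarrow a$ by one member of this family, for each such $b$ separately.

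Before the main construction I would establish that $\preceq$ is a closed subset of $C\times C$, so that $\downarrow a$ and $\uparrow a$ are closed for every $a\in C$. If $(a_n,b_n)\to(a,b)$ with $a_n\prec b_n$, then $a_n,b_n\in X$; the diameter requirements $ln(X_{i_0,\dots,i_n,i})\le 2^{-(n+1)}$ force any sequence of points of $X$ running off to ever deeper levels along a branch to converge to the (unique, diameter‑zero) limit of that branch, while a short bookkeeping with the explicit description of the principal up‑sets — which, according to the parity of the level, is either a finite chain or a finite union of chains each converging to a point already in the up‑set, hence topologically ``localised'' — shows that the limiting pair again lies in $\preceq$; the remaining, eventually shallow, cases are immediate. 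Consequently $\downarrow a$ is closed, so by compactness it is separated from any $b\notin\downarrow a$ by a clopen set, and the genuine task is to thin such a clopen set down to one that is $\preceq$‑decreasing (equivalently, to fatten the disjoint closed sets $\downarrow a$ and $\uparrow b$ apart into a clopen decreasing/increasing complementary pair).

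The main step is the construction of the separating set, organised by the position of $b$. If $b\notin X$ then $b\preceq c$ only for $c=b$, so $a\not\succeq b$ merely means $a\neq b$; here one takes a clopen neighbourhood of $b$ small enough to meet $X$ only in points lying arbitrarily far down the single branch passing near $b$, uses the alternating ``one‑way'' structure of the tree (each node's infinitely many children point uniformly up or down, and the relevant up‑sets sit inside small clopen sets $X_\sigma$) together with the shrinking diameters to enclose the $\preceq$‑upward closure of that neighbourhood inside a clopen increasing set still missing $\downarrow a$, and takes $U$ to be its complement. If $b\in X$, one instead works inside the tree: the explicit form of $\uparrow b$ lets one surround it by a clopen increasing set, assembled from the nested family $\{X_\sigma\}$ and padded with non‑$X$ points so as to become open, which avoids the closed set $\downarrow a$; once more $U$ is its complement.

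The main obstacle is that no down‑set of a point of $X$ is open: every neighbourhood of such a point $p$ contains infinitely many strictly deeper points of $X$, whose own down‑sets sprawl, so a clopen $\preceq$‑decreasing set containing $p$ can never be small, and one is forced to build a genuinely large downward‑closed clopen set that nonetheless misses the prescribed $b$. Controlling this requires knowing exactly where the branches of $X$ ``terminate'' and checking that all the forced additions remain inside clopen sets disjoint from $\downarrow a$ — which is precisely what the conditions $ln(X_\sigma)\to 0$ (branches shrink to single points) and $(0,s_m),(0,s_m]\notin X_{\dots}$ (those points are gaps, hence neither interval endpoints nor members of $X$, and so $\preceq$‑trivial) are designed to supply. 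Verifying that these conditions actually suffice, uniformly over the infinitely many children hanging at each node and over both parities of level in the alternating tree, is where the bulk of the work lies.
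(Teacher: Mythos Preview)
Your outline points in a workable direction but it is not yet a proof, and it diverges from the paper's argument in ways worth noting.

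The paper does not argue via closedness of $\preceq$ at all. It proceeds by an explicit case analysis, and the engine that drives every case is one concrete observation you never isolate: for each finite string $i_0,\dots,i_n$, the finite union $\bigcup_{k\le i_n} X_{i_0,\dots,i_{n-1},k}$ is clopen and, because of the parity--alternating definition of $\preceq$, is $\preceq$--increasing or $\preceq$--decreasing according as $n$ is even or odd. These sets, together with their complements, are the separating clopen monotone sets in every nontrivial case. The paper also exploits a simplification you miss entirely: by construction $\preceq\subseteq\leq$, so the $\leq$--decreasing clopen intervals $[\emptyset,(0,s]]$ are automatically $\preceq$--decreasing, which disposes of the case $u<v$ (in $\leq$) in one line and reduces everything to $u>v$ with $u\not\succeq v$.

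Your route through closedness of $\preceq$ is in principle legitimate, but two things are off. First, once you know $\preceq$ is closed on a compact totally disconnected Hausdorff space, you are already done: that hypothesis is equivalent to the Priestley separation property, so the subsequent ``thinning'' programme is redundant. Second, and more seriously, your justification that $\preceq$ is closed is a hand--wave. The delicate point is exactly the one you flag as the main obstacle later: convergent sequences in $X$ can accumulate at branch--limit points outside $X$, and one must check that no pair $(a_n,b_n)$ with $a_n\preceq b_n$ can converge to a pair $(a,b)$ with $a\ne b$ and at least one of $a,b$ outside $X$. That requires the same detailed control over where the $x_\sigma$'s sit relative to the $X_\sigma$'s that the paper's case analysis uses; ``short bookkeeping'' does not cover it.

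In your ``main step'' you speak of taking a clopen neighbourhood of $b$ and enclosing its $\preceq$--upward closure in a clopen increasing set, but you never say what that set is. This is precisely where the paper's finite unions $\bigcup_{k\le i_n} X_{i_0,\dots,i_{n-1},k}$ enter, and without naming them (or something equivalent) the construction is not specified. As written, the proposal identifies the right ingredients---the nested $X_\sigma$, the shrinking diameters, the parity alternation---but does not assemble them into an argument.
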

\begin{proof}
As $(C;\preceq )$ is a poset and $(C;\tau )$ is a compact
totally disconnected space, it remains to show that, for $u,v\in
C$, whenever $u\not \succeq v$ there exists a clopen decreasing
set $U$ such that $u\in U$ and $v\not\in U$.\\

Since $(X;\preceq )$ is order-isomorphic to $(P;\leq
)$, we set, for $0\leq n<\omega$, 
\[ X(n) = \{ x\}\cup \{ x_{i_0,\ldots , i_m}\mid 0\leq m\leq n 
\mbox{ and, for } 0\leq j\leq n,\ 0\leq i_j<\omega \} \] 
and observe that, as $[x_{i_0,\ldots , i_n})$ or $(x_{i_0,\ldots ,i_n}]$ 
is a finite chain depending on whether $n$ is even or odd, respectively,  
it follows from the choice of elements in
$X\setminus X(n)$ that, for $0\leq i_0,\ldots , i_n<\omega $,
$\bigcup (X_{i_0,\ldots ,i_{n-1},k}:\ 0\leq k\leq i_n)$ is clopen
increasing or decreasing, accordingly.\\

Consider $u,v\in C$ with $u\not\succeq v$.  In each case we will exhibit 
a clopen decreasing set $U$ such that $u\in U$ and
$v\not\in U$.\\

If $u<v$, then $u\leq (0,s]<v$ for some $s\in Q$.  Since $\preceq$
is compatible with $\leq$, set $U = (\emptyset ,(0,s]]$.  Henceforth, 
we assume that $u>v$ and, in particular, $u$ and $v$
are incomparable under $\preceq$.\\

Suppose there is an infinite sequence $(i_k:\ 0\leq k<\omega )$
such that $u\in X_{i_0,\ldots ,i_k}$ for any $0\leq k<\omega$.
Then, by choice, $u$ is a gap and, since $ln(X_{i_0,\ldots
,i_n})\leq \frac{1}{2^n}$, $v\not\in X_{i_0,\ldots ,i_n}$ for some $0\leq
n<\omega$. Without loss of generality, we may assume that 
$n$ is even.  Set $U = \bigcup (X_{i_0,\ldots
,i_n,l}:\ 0\leq l\leq i_{n+1})$. By
the above observation, $U$ is clopen decreasing,
$u\in U$, and, since $U\subseteq X_{i_0,\ldots ,i_n}$, $v\not\in
U$.\\

Likewise, if there is an infinite sequence $(j_k:\ 0\leq k<\omega
)$ such that $v\in X_{j_0,\ldots ,j_k}$ for any $0\leq k<\omega$,
then $v$ is a gap and, since $ln (X_{j_o,\ldots
,j_m})\leq \frac{1}{2^m}$, $u\not\in X_{j_0,\ldots ,j_m}$ for some $0\leq
m<\omega$.  We may assume, again with no loss in generality, that 
$m$ is
odd.  Set $U = C\setminus \bigcup (X_{j_0,\ldots ,j_m,l}:\ 0\leq
l\leq j_{m+1})$.  Then, $U$ is clopen decreasing,
$v\not\in U$, and, since $U\subseteq C\setminus X_{j_0,\ldots
,j_m}$, $u\in U$.\\

Suppose, for some finite sequence $(i_k:\ 0\leq k\leq n)$, $u\in
X_{i_0,\ldots ,i_n}$, but $u\not\in X_{i_0,\ldots ,i_n,l}$ for any
$0\leq l<\omega$.  Then, providing $u\neq x_{i_0,\ldots ,i_n}$,
it is not hard to see that there exists a clopen set $U$ such that $u\in U$,
$v\not\in U$, and each element of $U$ is incomparable under $\preceq$ to any other element of
$(C;\preceq )$, whereby $U$ is decreasing.  Were it the case
that $u\not\in X_l$ for any $0\leq l<\omega$, then a 
similar set may be defined unless $u = x$.\\

Likewise, suppose it is the case that, for some finite sequence $(j_k:\ 0\leq k\leq
m)$, $v\in X_{j_0,\ldots ,j_m}$, but that $v\not\in X_{j_0,\ldots
,j_m,l}$ for any $0\leq l<\omega$.  Then, providing $v\neq
x_{j_0,\ldots ,j_m}$, there exists a clopen set $V$ such that
$v\in V$, $u\not\in V$, and each element of $V$ is 
incomparable under $\preceq$ to any other element of
$(C;\preceq )$.  In this case, set $U = C\setminus V$.  
Likewise, unless $v = x$, a similar set may be defined whenever
$v\not\in X_l$ for any $0\leq l<\omega$.\\

Thus, it now remains to consider the eventuality that $u = x$ or
$x_{i_0,\ldots ,i_n}$ for some $(i_k:\ 0\leq k\leq n)$ and $v = x$
or $x_{j_0,\ldots ,j_m}$ for some $(j_k:\ 0\leq k\leq m)$.
Observe that, by hypothesis, since $v<u$, $u = x$ is impossible
and, hence, we need only consider $u = x_{i_0,\ldots ,i_n}$ 
for some $(i_k:\ 0\leq k\leq
n)$.  Further,
if $v = x$, then, by hypothesis, $u = x_{i_0,\ldots ,i_n}$ for
some $n>0$.  Since $v\not\in X_{i_0}$ and $u\neq x_{i_0}$, $u\in U
= \bigcup (X_{i_0,k}:\ 0\leq k\leq i_1)\subseteq X_{i_0}$, which,
as observed above, is clopen decreasing.  Thus, in addition, we may
assume that $v = x_{j_0,\ldots ,j_m}$ for some $(j_k:\ 0\leq k\leq
m)$.\\

A number of possibilities still remain to be considered.\\

Suppose first that $n\leq m$.\\

Consider $i_k = j_k$ for all $0\leq k\leq n$.  Then, by hypothesis,
$m\geq n+2$ and, since $u>v$, $n$ is even.  Thus, $V = \bigcup
(X_{i_0,\ldots ,i_n,j_{n+1},l}:\ 0\leq l\leq j_{n+2})$ is clopen
increasing $v\in V$, and $u\not\in V$.  Set $U = C\setminus
V$.\\

Suppose $i_k = j_k$ for all $0\leq k<n$, but $i_n\neq j_n$.  Then,
by hypothesis, $m\geq n+1$. Suppose $n$ is even.  Were it the case
that $i_n>j_n$, then it would follow that $u<v$, contrary to
hypothesis.  Thus, we may assume that $i_n<j_n$.  But then it
follows that $m\geq n+2$.  Thus, $v\in V = \bigcup (X_{i_0,\ldots
,i_{n-1},j_n,j_{n+1},l}:\ 0\leq l\leq j_{n+2})$ which is clopen
increasing and, since $V\subseteq X_{i_0,\ldots ,i_{n-1},j_n}$,
$u\not\in V$.  Suppose $n$
is odd.  Thus, $v\in V = \bigcup (X_{i_0,\ldots ,i_{n-1},j_n,l}:\
0\leq l\leq j_{n+1})$, which is clopen increasing, and again, since $V\subseteq
X_{i_0,\ldots ,i_{n-1},j_n}$, $u\not\in V$. In either case, set $U = C\setminus
V$.\\

Consider, for some $0\leq k\leq n-1$, $i_l = j_l$ for all $0\leq l
<k$, but $i_k\neq j_k$.  If $k$ is even, then $u\in U = \bigcup
(X_{i_0,\ldots ,i_{k-1},i_k,l}:\ 0\leq l\leq i_{k+1})$ which is clopen
decreasing and, since $U\subseteq X_{i_0,\ldots ,i_{k-1},i_k}$ and
$v\in X_{i_0,\ldots ,i_{k-1},j_k}$, $v\not\in U$.  If $k$ is odd,
then $v\in V = \bigcup (X_{i_0,\ldots ,i_{k-1},j_k,l}:\ 0\leq
l\leq j_{k+1})$ which is clopen increasing and, since $V\subseteq
X_{i_0,\ldots ,i_{k-1},j_k}$ and $u\not\in X_{i_0,\ldots
,i_{k-1},j_k}$, $u\not\in V$.  In this case, set $U = C\setminus V$.\\

It remains to consider $n>m$.\\

Suppose $i_k = j_k$ for all $0\leq k\leq m$.  Then, by hypothesis,
$n\geq m+2$ and, since $u>v$, $m$ is odd. Hence, $u\in U = \bigcup
(X_{j_0,\ldots ,j_m,i_{m+1},l}:\ 0\leq l\leq i_{m+2})$ which is clopen
decreasing, whilst $v\not\in U$.\\

Consider $i_k = j_k$ for all $0\leq k<m$, but $i_m\neq j_m$.  By
hypothesis, $n\geq m+1$. Suppose $m$ is even.  Then, 
$u\in U = \bigcup (X_{j_0,\ldots ,j_{m-1},i_m,l}:\ 0\leq
l\leq i_{m+1})$ which is clopen decreasing, and, since $U\subseteq
X_{j_0,\ldots ,j_{m-1},i_m}$, $v\not\in U$.  Suppose $m$ is odd.
Were $i_m<j_m$, then it would follow that $u<v$, contrary to hypothesis.
Thus, we may assume that $i_m>j_m$ and, so, $n\geq m+2$.  Hence, $u\in U = \bigcup
(X_{j_0,\ldots ,j_{m-1},i_m,i_{m+1},l}:\ 0\leq l\leq i_{m+2})$
which is clopen decreasing, and, since it is also the case that
$U\subseteq X_{j_0,\ldots ,j_{m-1},i_m}$, $v\not\in U$.\\

Finally, it remains to consider the case that, for some $0\leq
k\leq m-1$, $i_l = j_l$ for all $0\leq l<k$, but $i_k \neq j_k$.
However, the same argument holds, word for word, as given in the 
analogous case when
$n\leq m$.
\end{proof}

Since the order components of $(C;\tau ,\preceq )$ consist of precisely
$X = \{ x \} \cup \{ x_{i_0,\ldots ,i_n}\mid 0\leq n < \omega \mbox{
and } 0\leq i_j < \omega \mbox{ for } 0\leq j\leq n \}$ and
$2^{\omega}$ singletons and, by choice, $(X;\preceq )$ is
order-isomorphic to $(P;\leq )$, the following is an immediate
consequence of \ref{B1}.\\

\begin{Lemma} \label{B2}
$(P;\leq )$ is order-isomorphic to an order component of a
representable poset.
\end{Lemma}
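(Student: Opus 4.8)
The proof of Lemma \ref{B2} is essentially an exercise in unwinding the
construction that precedes it, so the plan is to make that unwinding
explicit. We have already built, on the Stone space $(C;\tau)$ of a
countable atomless Boolean algebra, a partial order $\preceq$ whose
nontrivial part was defined inductively by choosing gaps
$x,x_i,x_{i_0,i_1},\ldots$ together with nested systems of clopen
intervals $X_i,X_{i_0,i_1},\ldots$ of shrinking length. The subset
$X = \{x\}\cup\{x_{i_0,\ldots,i_n}:0\le n<\omega,\ 0\le i_j<\omega\}$
carries exactly the relations prescribed in the definition of $(P;\leq)$,
and everywhere off $X$ the relation $\preceq$ is trivial; hence $X$ is an
order component of $(C;\preceq)$ and $(X;\preceq)$ is by fiat
order-isomorphic to $(P;\leq)$.

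First I would confirm that $X$ really is a single order component: this
is immediate because $(P;\leq)$ was already shown (in the paragraph
verifying that $(P;\leq)$ is a poset) to be a connected poset, and the
order-isomorphism between $(X;\preceq)$ and $(P;\leq)$ transports
connectedness. Next I would note that the remaining points of $C$ fall
into $\preceq$-antichains of size one, so the order components of
$(C;\preceq)$ are precisely $X$ together with $2^{\omega}$ singletons
(the cardinality count coming from $|C| = 2^{\omega}$, since $C$ is the
Stone space of a countable Boolean algebra). Finally, by Lemma \ref{B1},
$(C;\tau,\preceq)$ is a Priestley space, so by Priestley duality
$(C;\preceq)$ is a representable poset. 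Combining these observations,
$(P;\leq)$ is order-isomorphic to the order component $X$ of the
representable poset $(C;\preceq)$, which is exactly the assertion of
Lemma \ref{B2}.

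Since all of the real work — checking that $\preceq$ is well defined on
$C$, that $X$ is closed under the prescribed relations, and above all
that $(C;\tau,\preceq)$ satisfies the total order-disconnectedness axiom
— has already been carried out in the construction and in Lemma
\ref{B1}, there is essentially no obstacle left here; the statement is a
bookkeeping corollary. The one point that deserves an explicit sentence
rather than being left implicit is the identification of the order
components of $(C;\preceq)$, i.e.\ the claim that the inductively chosen
points genuinely exhaust one $R'$-class and that no stray comparabilities
were introduced among the ``elsewhere'' points; but this follows
directly from the last displayed sentence of the construction, where
$\preceq$ is declared trivial off $X$. Accordingly the proof can be
given in a line or two, simply citing Lemma \ref{B1} and the preceding
construction.
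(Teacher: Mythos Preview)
Your proposal is correct and matches the paper's own approach essentially verbatim: the paper treats Lemma \ref{B2} as an immediate consequence of Lemma \ref{B1}, noting only that the order components of $(C;\preceq)$ are precisely $X$ together with $2^{\omega}$ singletons and that $(X;\preceq)\cong(P;\leq)$ by construction. Your additional remarks about why $X$ forms a single component and why no stray comparabilities occur simply make explicit what the paper leaves as a one-line observation.
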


The proof of \ref{A1} will be complete once we have established the following.\\

\begin{Lemma} \label{B6}
$(P;\leq )$ is not representable.
\end{Lemma}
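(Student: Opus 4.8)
The plan is to argue by contradiction. Suppose $(P;\leq )$ is representable and fix a topology $\pi$ making $(P;\pi ,\leq )$ a Priestley space. The aim is to show that $(P;\pi )$ has no isolated points; since a Priestley space is compact and Hausdorff while $P$ is countably infinite, this is impossible, because a nonempty compact Hausdorff space without isolated points is uncountable.

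First I would record a general fact: \emph{in a Priestley space $(Q;\rho ,\leq )$, if $D\subseteq Q$ is up-directed and has a supremum $s$ in $Q$, then $s$ lies in the closure of $D$; dually for the infimum of a down-directed set.} For the proof, assume $s\notin \overline D$ and take a basic clopen neighbourhood $V=U_1\cap \dots \cap U_m\cap W_1\cap \dots \cap W_n$ of $s$ disjoint from $D$, with each $U_i$ clopen decreasing and each $W_j$ clopen increasing. Every $d\in D$ satisfies $d\leq s\in U_i$, so $d\in U_i$; hence $D$ is disjoint from $W_1\cap \dots \cap W_n$, and for each $d\in D$ some $W_j$ omits $d$. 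Were every $W_j$ to meet $D$, directedness would yield $d^{*}\in D$ lying above a chosen point of each $D\cap W_j$, whence $d^{*}\in V\cap D$; so some $W_{j_0}$ misses $D$, i.e. $D\subseteq Q\setminus W_{j_0}$, a clopen decreasing set not containing $s$. Since the sets $[d)$ $(d\in D)$ are closed in $Q$ and have the finite intersection property, compactness gives $t\in (Q\setminus W_{j_0})\cap \bigcap_{d\in D}[d)$; then $t$ is an upper bound of $D$, so $s\leq t$, and as $Q\setminus W_{j_0}$ is decreasing this forces $s\in Q\setminus W_{j_0}$, a contradiction.

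Now apply this to the $\omega$-chains built into $P$. For $p$ put $C=\{\,p_j:0\leq j<\omega \,\}$, and for $p_{i_0,\dots ,i_n}$ put $C=\{\,p_{i_0,\dots ,i_n,j}:0\leq j<\omega \,\}$. Using the defining relations — in particular the fact recorded above that $[p_{i_0,\dots ,i_n})$, respectively $(p_{i_0,\dots ,i_n}]$, is a finite chain when $n$ is even, respectively odd — one checks directly that $p=\inf C$, and that $p_{i_0,\dots ,i_n}$ equals $\sup C$ when $n$ is even and $\inf C$ when $n$ is odd. In every case $C$ is infinite and, having one fewer subscript, the relevant element $p$ (resp. $p_{i_0,\dots ,i_n}$) does not belong to $C$; by the general fact it lies in the $\pi$-closure of $C$, so it is a cluster point of the infinite set $C$, hence is not $\pi$-isolated.

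Since every point of $P$ is either $p$ or some $p_{i_0,\dots ,i_n}$, no point of $(P;\pi )$ is isolated, and the contradiction is complete; thus $(P;\leq )$ is not representable. The one real computation is the (parity-sensitive but routine) verification that the leading term of each chain $C$ is indeed its supremum or infimum, and this is the step where the peculiar ``zig-zag'' design of $P$ — a connected poset engineered so that every point sits at the top or the bottom of one of these $\omega$-chains — is used; the general lemma and the concluding cardinality argument are both short.
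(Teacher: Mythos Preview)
Your proof is correct, and the core step --- showing that every point of $P$ is an accumulation point in any Priestley topology, via the $\omega$-chains whose supremum or infimum it is --- matches the paper's argument in spirit, though you package it as a clean general lemma about directed sets and their suprema/infima in Priestley spaces, while the paper argues more directly with open covers of each chain.

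The genuine divergence is in how the contradiction is extracted once no point is isolated. You finish immediately with the standard fact that a nonempty compact Hausdorff space without isolated points is uncountable. The paper instead passes back through the duality: no isolated points forces the dual lattice $L$ to be order-dense, hence to contain a copy of the rationals $[0,1]\cap\Q$; dualising this embedding yields a continuous surjection from the countable space $P$ onto the uncountable space $D(Q^+)$. Your route is shorter and more self-contained (it needs only a Baire-category style fact about compact Hausdorff spaces), while the paper's route, though longer, stays entirely within the Priestley-duality framework and incidentally exhibits \emph{why} the obstruction is lattice-theoretic --- the dual lattice would have to be dense.
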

\begin{proof}

Suppose, contrary to hypothesis, that $(P;\leq )$ is representable and let $(P;\tau ,\leq )$ be a Priestley space for some topology $\tau$.\\

We claim that, for $x\in P$, there is a sequence $(x_i:\ 0\leq i<\omega )$ such that either, 
for $0\leq j<i<\omega$, $x_i<x_j$ and $x$ is the greatest lower bound of 
$\{ x_i\mid 0\leq i<\omega\}$ or,
for $0\leq i<j<\omega$, $x_i<x_j$ and $x$ is the least upper bound of 
$\{ x_i\mid 0\leq i<\omega\}$.\\

To justify the claim, we consider the various possibilities.  
If $x = p$, then setting $x_i = p_i$ yields, 
for $0\leq j<i<\omega$, $p<p_i<p_j$.  Moreover, 
for $y\in P\setminus P(0)$, $[y)\cap P(0)$ is finite.  
In particular, $p$ is the greatest lower bound of $\{ p_i\mid 0\leq i<\omega \}$.  
Similarly, for $x = p_{i_0,\ldots ,i_n}$, let $x_i = p_{i_0,\ldots ,i_n,i}$ 
for $0\leq i<\omega$.  If $n$ is even, then, for $0\leq i<j<\omega$, 
$$p_{i_0,\ldots ,i_n,i} < p_{i_0,\ldots ,i_n,j} < p_{i_0,\ldots ,i_n}.$$  
Since $p_{i_0,\ldots ,i_n}$ is the greatest lower bound of 
$[p_{i_0,\ldots ,i_n,i})$ and, for $y\in P\setminus P(n+1)$, 
$(y]\cap P(n+1)$ is finite, it follows that 
$p_{i_0,\ldots ,i_n}$ is the least upper bound of 
$\{ p_{i_0,\ldots ,i_n,i}\mid 0\leq i<\omega \}$.  
Likewise, if $n$ is odd, then, for $0\leq j<i<\omega$,
$$p_{i_0,\ldots ,i_n} < p_{i_0,\ldots ,i_n,i} < p_{i_0,\ldots ,i_n,j}.$$
Since $p_{i_0,\ldots ,i_n}$ is the least upper bound of 
$(p_{i_0,\ldots ,i_n}]$ and, for every 
$y\in P\setminus P(n+1)$, $[y)\cap P(n+1)$ is finite, it follows 
that $p_{i_0,\ldots ,i_n}$ is the greatest lower bound of 
$\{ p_{i_0,\ldots ,i_n,i}\mid 0\leq i<\omega \}$.\\

Using the above claim, we now show that every $x\in P$ is an 
accumulation point.\\

To see this, say $x$ is the greatest lower bound of 
$\{ x_i\mid 0\leq i<\omega \}$ where, for $0\leq j<i<\omega$, $x_i<x_j$.
For $0\leq i<\omega$, there exists a clopen increasing set $V_i$ 
such that $x_i\in V_i$ and $x_{i+1}\not\in V_i$.  Clearly, 
$\{ V_i\mid 0\leq i<\omega \}$ is an open cover of 
$S = \{ x_i\mid 0\leq i<\omega \}$ with no finite subcover.  
In particular, $S$ is not closed.  Choose $y\in cl(S)\setminus S$.  
If $y\not\geq x$, then there is a clopen decreasing set $U$ with $y\in U$ 
and $x\not\in U$, from which it follows that $U\cap S = \emptyset$, 
contradicting $y\in cl(S)$.  If $y>x$, then $y$ is not a lower bound of $S$, 
as $x$ is the greatest.  In particular, for some $0\leq n<\omega$, $x_n\not\geq y$.  
It follows that there is a clopen decreasing set $U$ with $x_n\in U$ and $y\not\in U$.  
Thus, $S\subseteq \{ x_0,\ldots ,x_n\}\cup U$, which is a closed set.  
On the other hand, $y\in P\setminus (\{ x_0,\ldots ,x_n\}\cup U)$, contradicting 
the fact that $y\in cl(S)$.  We conclude that $y = x$ and, in particular, 
that, as claimed, $x$ is an accumulation point.  As similar argument holds 
in the case that $x$ is the least upper bound of 
$\{ x_i\mid 0\leq i<\omega \}$ where, for $0\leq i<j<\omega$, $x_i<x_j$.\\

Suppose then that $L$ is a bounded distributive lattice such that 
$(D(L);\tau (L),\subseteq )$ (recall the notation introduced 
in \S\ref{A}) is homeomorphic and 
order-isomorphic to $(P;\tau ,\leq )$.  For $a$, $b\in L$, there 
correspond clopen decreasing sets $A$, $B$, respectively.  Suppose $a<b$.  
Then $A\subset B$ and it is possible to choose $x\in B\setminus A$.  
Since $x$ is an accumulation point, there exists a distinct element 
$y\in B\setminus A$.  Say, without loss of generality, $x\not\geq y$.  
Then there exists a clopen decreasing set $U$ with $x\in U$ and $y\not\in U$.  
Set $C = A\cup (B\cap U)$.  Then $C$ is a clopen decreasing set such that 
$A\subset C\subset B$.  In particular, $C$ corresponds to an element 
$c\in L$ such that $a<c<b$.  We conclude that $(Q;\leq )$ the rational interval 
$(0,1)$ is embeddable in $L$, that is, $(Q^+;\leq )$ the rational interval 
$[0,1]$ is a $(0,1)$-sublattice of $L$.  If one such embedding is denoted by 
$f^+:Q^+\longrightarrow L$, then $f$ corresponds to continuous 
order-preserving map $D(f):D(L)\longrightarrow D(Q^+)$ which is also onto.  
That is, there is a mapping from $P$ onto $D(Q^+)$.  Since $D(Q^+)$ is 
uncountable and $P$ is countable, this is impossible and, as required, 
we conclude that $(P;\leq )$ is not representable.
\end{proof}

\footnotesize

\vspace*{1cm}

 \begin{tabbing}
State University of New York put it more to t

\= some more some more some some more \kill

M.~E.~Adams \> D.~van der Zypen \\
Department of Mathematics \> Allianz Suisse Insurance Company \\
State University of New York \> Bleicherweg 19 \\
New Paltz, NY 12561, U.S.A.\> CH-8022 Zurich, Switzerland \\
adamsm@newpaltz.edu \> dominic.zypen@gmail.com\\

\end{tabbing}

\end{document}